\newtheorem{Theorem}{Theorem}[section]
\newtheorem{Lemma}[Theorem]{Lemma}
\theoremstyle{definition}
\theoremstyle{remark}
\newtheorem*{Remark}{Remark}
\numberwithin{equation}{section}
\title[An easy proof of Ramanujan's famous congruences]{An easy proof of 
Ramanujan's famous congruences $p(5m+4)\equiv 0 \equiv \tau(5m+5) \pmod 5$}
\author[H.~S.~Bal]{Hartosh Singh Bal
}
\address{The Caravan\\
Jhandewalan Extn., New Delhi 110001, India}
\email{hartoshbal@gmail.com}
\author[G.~Bhatnagar]{Gaurav Bhatnagar
}
\address{RamanujanExplained.org, 18 Chitra Vihar, Delhi 110092, India}
\email{bhatnagarg@gmail.com}
\urladdr{https://gauravbhatnagar.org}
\date{}
\keywords{Partitions, Ramanujan's $\tau$ function, congruences}
\subjclass[2020]{Primary: 11P83; Secondary:  05A17}
\begin{document}

\begin{abstract} We present a proof of Ramanujan's congruences 
$$p(5n+4) \equiv 0 \pmod 5  \text{ and } \tau(5n+5) \equiv 0 \pmod 5.$$ The proof only requires a limiting
case of Jacobi's triple product, a result that Ramanujan knew well, and a technique which Ramanujan used himself to compute values of $\tau(n)$.
\end{abstract}

\maketitle

\thispagestyle{myheadings}
\font\rms=cmr8
\font\its=cmti8
\font\bfs=cmbx8

\markright{\its S\'eminaire Lotharingien de
Combinatoire \bfs 93 \rms (2025), Article~B93a\hfill}
\def\thepage{}

\section{Introduction}
Let $p(n)$ be the number of unordered partitions of a non-negative integer $n$. They are given by the generating function
$$P(q) =  \sum_{n=0}^\infty p(n) q^n = \prod_{k=1}^\infty \frac{1}{1-q^{k}}=\frac{1}{(1-q)(1-q^2)(1-q^3)\cdots}.$$
Similarly, Ramanujan's $\tau$ function is defined by the generating function
$$\sum_{n=0}^\infty \tau(n+1)q^n=\prod_{k=1}^\infty (1-q^k)^{24}.$$
The objective of this paper is to give a proof of Ramanujan's congruences
\begin{equation*}
p(5m+4)  \equiv 0 \pmod 5 \text{ and }
\tau(5m+5) \equiv 0\pmod 5 .
\end{equation*}
Ramanujan's proof of the $p(5m+4)$ congruence appears in \cite[Paper~25]{Ramanujan-CW}; for his proof of the second congruence, see Berndt and Ono~\cite{BO1999}. Our proof proves both at the same time; even so, it is easier.

We embed these in an infinite family of congruences. 
Let $P_r(n)$ be defined using the equation
\begin{equation}\label{Pr-def}
P(q)^r = \sum_{n=0}^\infty P_r(n)q^n,
\end{equation}
where $r$ is a rational number. In this notation $P_1(n)=p(n)$ and $P_{-24}(n)= \tau(n+1)$. 
We will show:
\begin{equation}\label{th:congruence5}
 P_r(5m+4) \equiv 0 \pmod 5, \text{ if } r \equiv 1 \pmod 5,
\end{equation}
where $r$ is a rational number.\footnote{In these congruences,
a rational number $a/b$, with $a$ and~$b$ relatively prime and $b$ not
divisible by~$5$, is understood as $a\cdot b^{-1}$, where $b^{-1}$ is
the multiplicative inverse of~$b$ modulo~$5$.}
The cases $r=1$ and $r=-24$ give Ramanujan's congruences. 

The main ingredient of our proof is Jacobi's identity, a 
limiting
case of his triple product identity~\cite[p.~14]{Berndt2006},
\begin{equation}\label{jacobi1} 
P(q)^{-3}=\prod_{k=1}^\infty (1-q^k)^{3} = \sum_{k=0}^\infty (-1)^k (2k+1) q^{\frac{k(k+1)}{2}}.
\end{equation}
Ramanujan had rediscovered Jacobi's result. In our notation, it can be rewritten as follows:
\begin{equation}\label{jacobi2}
P_{-3}(n)= 
\begin{cases}
(-1)^k(2k+1), & \text{if } n = \frac{k(k+1)}2;\\
0, & \text{otherwise}.
\end{cases}
\end{equation}

\pagenumbering{arabic}
\addtocounter{page}{1}
\markboth{\SMALL H. S. BAL AND G. BHATNAGAR}
{\SMALL AN EASY PROOF OF RAMANUJAN'S FAMOUS CONGRUENCES}

We mention that the result \eqref{th:congruence5}, and this proof, appears in the authors' previous paper~\cite{BB2022a}. The objective of this note is to present  the bare essentials of this proof. 

\section{The proof}
We first prove a recurrence relation satisfied by the coefficients of powers of any generating function. 
\begin{Lemma} Let $P(q)$ be any power series, $r$ and $s$ be non-zero, real, numbers, and $P_r(n)$  defined by~\eqref{Pr-def}. 
Then we have
\begin{equation}\label{lemma:1}
\sum_{k=0}^n \Big(n-\big( r/s + 1\big)k \Big) P_r(n-k)P_s(k) =0.
\end{equation}
\end{Lemma}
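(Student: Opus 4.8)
The plan is to extract the identity from the logarithmic derivative of $P(q)$. Set $F(q) := P(q)^r = \sum_{n\ge 0} P_r(n)\,q^n$ and $G(q) := P(q)^s = \sum_{k\ge 0} P_s(k)\,q^k$, regarded as formal power series; for this to make sense when $r$ or $s$ is not an integer one needs $P$ to have a nonzero constant term, which I will take to be normalized to $P(0)=1$ (as it is in every application, where $P(q)=\prod_k(1-q^k)^{-1}$). Writing $'$ for $d/dq$, the formal chain rule gives $F' = r\,P^{r-1}P'$ and $G' = s\,P^{s-1}P'$; multiplying the first equation by $s\,P^{s}$ and the second by $r\,P^{r}$ shows that $s\,F'G$ and $r\,F G'$ are both equal to $rs\,P^{r+s-1}P'$, hence
\begin{equation*}
s\,F'(q)\,G(q) = r\,F(q)\,G'(q).
\end{equation*}

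Next I would multiply this identity by $q$ and compare the coefficient of $q^n$ on the two sides. Since $qF'(q) = \sum_{m\ge 0} m\,P_r(m)\,q^m$ and $qG'(q) = \sum_{k\ge 0} k\,P_s(k)\,q^k$, the Cauchy product rule gives
\begin{equation*}
[q^n]\,qF'G = \sum_{k=0}^n (n-k)\,P_r(n-k)\,P_s(k),
\qquad
[q^n]\,qFG' = \sum_{k=0}^n k\,P_r(n-k)\,P_s(k).
\end{equation*}
Therefore $s\sum_{k=0}^n (n-k)\,P_r(n-k)\,P_s(k) = r\sum_{k=0}^n k\,P_r(n-k)\,P_s(k)$; dividing by $s\ne 0$ and collecting the two sums yields $\sum_{k=0}^n\big[(n-k)-(r/s)k\big]P_r(n-k)P_s(k)=0$, which is exactly \eqref{lemma:1} because $(n-k)-(r/s)k = n - (r/s+1)k$.

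The only point requiring care — the ``hard part'', though it is routine — is making sense of $P(q)^r$ for non-integer real $r$ and justifying the product and chain rules used above. With $P(0)=1$ this is handled by defining $P^r := \exp\!\big(r\log P\big)$ via the usual formal power series $\exp$ and $\log$, whereupon $(P^r)' = r\,P^{r-1}P'$ follows by differentiating termwise; alternatively one can differentiate the identity $F^{\,s} = P^{rs} = G^{\,r}$ and divide through by $P^{rs-r-s}$ to obtain $s\,F'G = r\,FG'$ directly. Everything after that is bookkeeping with Cauchy products, and no properties of the specific series $P(q)$ are used — which is why the statement holds for an arbitrary power series.
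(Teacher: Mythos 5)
Your proof is correct and follows essentially the same route as the paper: the paper takes logarithmic derivatives of $P^r$ and $P^s$ and eliminates the common factor $q\frac{d}{dq}\log P(q)$ to get $s\,qF'G = r\,qFG'$, then compares coefficients of $q^n$ exactly as you do. Your extra care about defining $P(q)^r$ for non-integer $r$ (requiring $P(0)=1$ and using formal $\exp$/$\log$) is a reasonable supplement to the hypothesis the paper relegates to its Remark.
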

\begin{proof} We take the log derivatives on both sides of~\eqref{Pr-def}, and multiply by $q$, to obtain
$$r \Big(q\frac{d}{dq} \log P(q) \Big) \sum_{n=0}^\infty P_r(n)q^n = \sum_{n=0}^\infty nP_r(n)q^n. $$
Similarly, we have
$$s \Big(q\frac{d}{dq} \log P(q) \Big) \sum_{n=0}^\infty P_s(n)q^n = \sum_{n=0}^\infty nP_s(n)q^n. $$
This gives, on eliminating the common factor,
$$
\frac{r}{s}  \Big(\sum_{n=0}^\infty P_r(n)q^n\Big)\Big(\sum_{n=0}^\infty nP_s(n)q^n\Big) 
=
\Big(\sum_{n=0}^\infty nP_r(n)q^n\Big)\Big(\sum_{n=0}^\infty P_s(n)q^n\Big)
$$
The recurrence \eqref{lemma:1} follows by comparing coefficients of $q^n$ on both sides. 
\end{proof}
\begin{Remark} In this lemma, $P(q)$ can be any generating function, and  $r$ and $s$ any non-zero real or complex numbers, as long as $P(q)^r$ and $P(q)^s$ are formal power series. 
However, we will only apply it when $P(q)$ is the generating function for partitions, and when $r$ and $s$ are 
rational numbers.
\end{Remark}

\begin{proof}[Proof of \eqref{th:congruence5}]
We use an inductive argument using the recurrence relation~\eqref{lemma:1}, with $s=-3$, in the form
\begin{equation}\label{ramanujan-gen-2}
nP_r(n) 
=\sum_{j=1}^\infty (-1)^{j+1}(2j+1)\Big(n+\big({r}/{3}-1\big){j(j+1)}/2\Big) P_r\big(n-{j(j+1)}/2\big).
\end{equation}
Here we have used Jacobi's result in the form~\eqref{jacobi2} for $P_{-3}(k)$.

Take $n=5m+4$. For $m=0$, \eqref{ramanujan-gen-2} reduces to
$$4P_r(4)=(9+r)P_r(3)-5(r+1)P_r(2),$$
so $P_r(4) \equiv 0$ (mod $5$) if $r\equiv 1$ (mod $5$). 

For $m>0$, 
consider the general term 
$$(-1)^{j+1}(2j+1)\left(n+(r/3-1)j(j+1)/2\right) P_r\big(n-j(j+1)/2\big)$$
in \eqref{ramanujan-gen-2} for each $j$. 
When $j\equiv 1$ (mod $5$) it is of the form
$$(-1)^{j+1}  (9+r) P_r(**) \text{ (mod $5$)},$$
and, when $j\equiv 3$ (mod $5$), it is of the form
$$(-1)^{j+1} 2 (2r-2) P_r(**) \text{ (mod $5$)}.$$
So, when $r\equiv 1$, these terms are $0 \pmod 5$. 
When $j\equiv 2$ (mod $5$), then the expression is of the form
$$(-1)^{j+1} 5 (1+r) P_r(**) \text{ (mod $5$)};$$
clearly, it is divisible by $5$. 
Finally,  when $j \equiv 4, 5$ (mod $5$), this reduces to an expression of the form
$$\big( * * *\big) P_r(5m+4-5k),$$
for some $k>0$, and so is $\equiv 0$ (mod $5$), by the induction hypothesis.

Thus each term of the sum on the right-hand side of~\eqref{ramanujan-gen-2} is $0 \pmod 5$. This completes the proof
by induction.
\end{proof}
The following companions to \eqref{th:congruence5} can be obtained from~\eqref{ramanujan-gen-2} by a similar proof: 
\begin{align}
P_r(5m+1) &\equiv 0 \pmod 5 \text{ if } r \equiv 0 \pmod 5;\\
P_r(5m+2) &\equiv 0 \pmod 5 \text{ if } r \equiv 2 \pmod 5;\\
P_r(5m+3) &\equiv 0 \pmod 5 \text{ if } r \equiv 4 \pmod 5.
\end{align}
For further such congruences modulo $3$, and some related results, see~\cite{BB2022a, BB2024}. 

\section{Commentary}
Taking log derivatives to obtain recurrences from generating functions is a standard procedure in generatingfunctionology,  explained in Chapter~1 of Wilf~\cite{wilf2006}. It was one of Ramanujan's favorite tricks. For example, we find the following recurrence in Ramanujan's work
(see \cite[p.~108]{Berndt1994}):
\begin{equation*}
\sum_{d=1}^{n} \sigma(d) p(n-d)=np(n).
\end{equation*}
Here $\sigma(n)$ is the sum of divisors of $n$. This can be obtained by log differentiation of $P(q)$ and comparing coefficients. An extension was given by Ford~\cite{ford1931}. 
See also Entries~12 and~13 in Chapter~10 of Ramanujan's notebooks~\cite[pages~28--32]{Berndt1989} for some intricate examples illustrating Ramanujan's use of log differentiation.  

Ramanujan's own recurrence for $\tau(n)$ is \eqref{ramanujan-gen-2} (with $r=-24$); from this he computed 30 values of $\tau(n)$ in his seminal paper~\cite{SR1916}. His proof uses log derivatives, and is along the lines of our proof of~\eqref{lemma:1}. Gould~\cite{Gould1974} credits an equivalent form of~\eqref{lemma:1}  to Rothe~(1793). 
Lehmer~\cite{Lehmer1943}  used Ramanujan's ideas to compute more values of $\tau(n)$. 

Interestingly, Ramanujan too considered congruence results for $p(n)$ and $\tau(n)$ in tandem (see~\cite{BO1999}); his proof of the $\tau(5n+5)$ congruence uses the $p(5n+4)$ congruence. 
Here is a relation connecting $\tau(n)$ with $p(n)$,  obtained by taking $s=1$ and $r=-24$ in~\eqref{lemma:1}:
\begin{equation}
\sum_{k=0}^n (n+23k) \tau(n-k+1)p(k) =0.
\end{equation}

\end{document}